\newcommand{\pushright}[1]{\ifmeasuring@#1\else\omit\hfill$\displaystyle#1$\fi\ignorespaces}
\newcommand{\pushleft}[1]{\ifmeasuring@#1\else\omit$\displaystyle#1$\hfill\fi\ignorespaces}
\newcommand{\myfootnote}[1]{
    \renewcommand{\thefootnote}{}
    \footnotetext{\hspace{-2pt}\scriptsize#1}
    \renewcommand{\thefootnote}{\arabic{footnote}}
}
\theoremstyle{definition}
\newtheorem{thm}{Theorem}[section]
\newtheorem{lem}{Lemma}[section]
\newtheorem{prop}{Proposition}[section]
\newtheorem{rk}{Remark}[section]
\newcommand{\field}[1]{\mathbb{#1}}
\newcommand{\N}{\field{N}}
\newcommand{\R}{\field{R}}
\newcommand{\C}{\field{C}}
\DeclareMathOperator{\id}{Id}
\DeclareMathOperator{\dist}{dist}
\DeclareMathOperator{\cl}{cl}
\numberwithin{equation}{section}
\begin{document}
\title[]{Extended explanation of Orevkov's paper on proper holomorphic embeddings of complements of Cantor sets in $\mathbb C^2$ and a discussion of their measure.}

\author{Giovanni D. Di Salvo}
\begin{abstract}
We clarify the details of a cryptical paper by Orevkov in which a construction of a proper holomorphic embedding $\gamma\colon\Bbb P^1\setminus C\hookrightarrow\C^2$ is presented; in particular, it is proved that such a construction can be done to get the Cantor set $C$ to have zero Hausdorff dimension.
\end{abstract}
\myfootnote
{
	Available on arXiv, \doi{10.48550/arXiv.2211.05046v2}.
}

\maketitle
\section{Introduction}

\subsection{The main result and motivation}
A very relevant open problem in complex geometry is to investigate whether every open Riemann surface $\mathcal R$ admits a proper holomorphic embedding into $\Bbb C^2$.
It has been suggested to search for a counterexample as the complement of a "thick" Cantor set $C$ inside the Riemann Sphere, that is $\mathcal R=\overline \C\setminus C$.
In paper \cite{O}, the author proved the existence of a proper holomorphic embedding from the complement of a Cantor set $C\subset\overline\C$ into $\C^2$.
Nevertheless no information dealing with the size of $C$ is given; moreover such a paper is quite cryptical. The motivations for the present paper are therefore two: to write in detail the construction of both the Cantor set $C\subset\overline\C$ and the proper holomorphic embedding $\gamma \colon \overline\C\setminus C\hookrightarrow\C^2$ originally presented in \cite{O} and discussing the measure of $C$, proving in particular that it can be constructed to have zero Hausdorff measure. From the construction it will be rather clear that it is really hard to prove that this $C$ could have any kind of positive measure.
The main result of the present paper, originally presented by S. Orevkov in \cite{O} without the estimate on the size of $C$, is the following

\begin{thm}[]\label{main}
There exists a Cantor set $C\subset \overline\C$ with $\dim_H(C)=0$ and a 
proper holomorphic embedding $\gamma\colon\overline\C\setminus C\hookrightarrow\mathbb C^2$. 
\end{thm}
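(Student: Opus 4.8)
The plan is to realize $\overline\C\setminus C$ as an increasing union of finitely connected planar domains and to build the embedding as a limit of holomorphic maps that are progressively corrected by automorphisms of $\C^2$. First I would fix the combinatorics of the Cantor set: choose a nested sequence of finite families of pairwise disjoint closed round disks $D_{n,j}\subset\overline\C$, where each disk at level $n$ contains at least two disks at level $n+1$, and set $K_n=\bigcup_j D_{n,j}$, $\Omega_n=\overline\C\setminus K_n$, so that $C=\bigcap_n K_n$ and $\overline\C\setminus C=\bigcup_n\Omega_n$ is an exhaustion by finitely connected domains. The radii $r_{n,j}$ are not fixed yet; they will be chosen recursively, small enough at each stage both to force $\dim_H(C)=0$ and to leave room for the analytic construction below. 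The dimension estimate is the easy bookkeeping part: if at level $n$ there are $N_n$ disks of radius at most $\rho_n$, then $\sum_j(2r_{n,j})^s\le N_n(2\rho_n)^s$, and by making $\rho_n$ decay fast enough relative to $N_n$ one gets, for every $s>0$, that this sum tends to $0$, whence $\mathcal H^s(C)=0$ and $\dim_H(C)=0$.

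The analytic heart of the argument is an inductive construction of holomorphic embeddings $\gamma_n\colon\Omega_n\hra\C^2$ together with automorphisms $\Phi_n\in\Aut(\C^2)$ such that $\gamma_{n+1}=\Phi_{n+1}\circ\gamma_n$ on a large compact part of $\Omega_n$ and such that each $\gamma_n$ is proper onto its image. The starting point is the classical fact that the complement of finitely many disks admits a proper holomorphic embedding into $\C^2$. For the inductive step, passing from $K_n$ to $K_{n+1}$ replaces each disk $D_{n,j}$ by several smaller disks; the annular regions lying between the old boundary $\partial D_{n,j}$ and the new boundaries $\partial D_{n+1,k}$ are the parts of $\Omega_{n+1}$ that must be sent off to infinity in order to preserve properness as the number of boundary components grows. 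I would construct an isotopy of biholomorphisms, defined on a Runge neighborhood of $\gamma_n(\Omega_n)$, that fixes the image of a prescribed compact exhausting set and translates the images of these new annuli to arbitrarily large norm; by the Anders\'en--Lempert theorem this isotopy is approximated, uniformly on a compact set and to within a prescribed tolerance $\varepsilon_n$, by a global automorphism $\Phi_{n+1}\in\Aut(\C^2)$. Choosing the new radii small enough guarantees that the relevant annuli are thin and that the approximation tolerance $\varepsilon_n$ can be met, which is exactly where the freedom left in the Cantor construction is spent.

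The main obstacle is to run the limit so that $\gamma=\lim_n\Phi_n\circ\cdots\circ\Phi_1\circ\gamma_1$ exists and is simultaneously an embedding and proper. Convergence on compacta is arranged by requiring $\sum_n\varepsilon_n<\infty$ together with $\Phi_{n+1}$ being close to the identity on the already-placed compact pieces, so that the tails of the infinite composition are negligible there; a standard Cauchy argument then yields a holomorphic limit $\gamma$ on $\overline\C\setminus C$. Injectivity and immersivity survive in the limit because the corrections are small relative to the separation already achieved at each finite stage, so that $\gamma$ is an embedding. Properness is the delicate point: one must verify that any sequence in $\overline\C\setminus C$ converging to a point of $C$ has $\gamma$-images leaving every compact subset of $\C^2$. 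This should follow by tracking, level by level, that the annuli pushed out at stage $n$ retain their large norm under all subsequent (nearly identity) automorphisms, so that a point close to $C$ sits in some far-pushed annulus and its image has large norm; the quantitative bookkeeping linking \vir{closeness to $C$} with \vir{largeness in $\C^2$} across infinitely many stages is the crux, and it must be tuned jointly with the choices of the $\varepsilon_n$ and the disk radii.
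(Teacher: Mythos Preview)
Your outline is a plausible route to the theorem, but it is \emph{not} the route the paper takes, and the difference is substantial. The paper (following Orevkov) does not use holomorphic automorphisms of $\C^2$ or the Anders\'en--Lempert theorem at all. Instead it composes explicit \emph{rational} shears
\[
f_n(x,y)=\Bigl(x,\ y+\tfrac{\epsilon_n}{x-a_n}\Bigr)\quad\text{or}\quad\Bigl(x+\tfrac{\epsilon_n}{y-a_n},\ y\Bigr),
\]
alternating between the two types, and sets $\gamma_n(z)=f_n\circ\cdots\circ f_1(z,0)$. These $f_n$ have poles, so $\gamma_n$ sends a finite set of points of $\overline\C$ to $\infty_2$; the Cantor set is not prescribed in advance by families of disks but \emph{emerges} as $C=\bigcap_n\Delta_n$, where $\Delta_n=\gamma_n^{-1}(\overline{\C^2}\setminus B_n)$ for bidisks $B_n$ of radius $R_n$. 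The pair-of-pants combinatorics is produced automatically by the recursion $r_{n+2}=r_n+\epsilon_{n+2}/(r_{n+1}-a_{n+2})$, whose pole count follows the Fibonacci sequence. Properness and the Hausdorff-dimension estimate both come from choosing $R_n$ (hence the preimage neighborhoods $U_j^{(n)}$) to shrink fast enough, e.g.\ $|U_j^{(n)}|<b_n^{-n}$. This is entirely elementary: no approximation theorem, no polynomial convexity argument, no black-box embedding of finitely connected domains.

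Your scheme, by contrast, is essentially the Forstneri\v{c}--Wold framework (and is close in spirit to the companion paper \cite{WDS} on large Cantor sets): start from a Globevnik--Stens\o nes embedding of a circle domain, then iteratively split ends and push annuli to infinity via automorphisms approximated by Anders\'en--Lempert. That machinery does work, but your sketch skips the technical hypotheses that make it run. Anders\'en--Lempert approximates an isotopy of injective holomorphic maps only when the domains $\Omega_t$ of the isotopy are Runge in $\C^2$ at each time $t$; speaking of ``a Runge neighborhood of $\gamma_n(\Omega_n)$'' for an unbounded properly embedded curve is not meaningful as stated, and in practice one has to arrange polynomial convexity of the compact piece you want to keep, together with a specific mechanism (exposed boundary points, attached rays, or the like) that lets the annuli be isotoped off to infinity through Runge domains. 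None of this is present in your outline, and it is exactly where the work lies in that approach. So: the strategy is viable but the sketch, as written, has a genuine gap at the isotopy/Runge step; and in any case it buys you nothing over Orevkov's argument, which reaches the same conclusion with far lighter tools.
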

It worth mentioning that in \cite{WDS} has been proved that the complement of a thick Cantor set inside $\overline \C$ does not constitute a counterexample, as it is therein provided the construction of a proper holomorphic embedding $\overline\C\setminus C\hookrightarrow\mathbb C^2$ where $C\subset\overline\C$ is a Cantor set of arbitrarily large measure. Namely: such $C$ is realized as a subset of a square whose side has length $2$ and for every $\epsilon>0$ it can be built of Lebesgue measure greater than $4-\epsilon$.

\section{Definitions and preliminary results}\label{prelim}

\subsection{Cantor sets}
The Cantor set is defined as the intersection of the decreasing family of subsets of the unit interval of the real line $E_0=[0,1]\subset\R$ defined inductively by removing at each step the open middle third from each interval of the previous set.\\
Thus $E_1=[0,\frac13]\cup[\frac23,1]$, $E_2=[0,\frac19]\cup[\frac29,\frac13]\cup[\frac23,\frac79]\cup[\frac89,1]$ and so on. Finally one defines
$$
E:=\bigcap_{n\ge0}E_n\subset[0,1].
$$
Given a topological space $X$, we define \emph{Cantor set} any subset $C\subset X$ homeomorphic to $E$. Hence a Cantor set is characterized by the topological properties of $E$; thus, according to Brouwer's theorem in \cite{B}, $C\subset X$ is a Cantor set if and only if it is\\
\begin{enumerate}[label=(\roman*)$_{C}$]
\item\label{ne} not-empty\;;
\item\label{per} perfect (i.e. closed and with no isolated points)\;;
\item\label{com} compact\;;
\item\label{td} totally disconnected (i.e. every its connected component is a one-point set)\;;
\item\label{met} metrizable\;.
\end{enumerate}

\subsection{Sperichal Distance}\label{sph}

The one point compactification of $\R^n$ is homeomorphic to $\Bbb S^n=\{(x_1,\dots,x_{n+1})\in\R^{n+1}\;:\;x_1^2+\cdots+x_{n+1}^2=1\}$ via stereographic projection:
\begin{align*}
\varphi_n&\colon\overline{\R^n}\to\Bbb S^n\;,\\
\varphi_n(x)&=\frac2{\|x\|^2+1}\left(x,\frac{\|x\|^2-1}{2}\right),\;\;x\neq\infty\;;\\
\varphi_n(\infty)&=(0,\dots,0,1)\;.
\end{align*}
This allows us to define a notion of distance on any such $\overline{\R^n}$ as
$$
d_n(x,y):=\arccos(\varphi_n(x)\cdot\varphi_n(y)),\;\;\;x,y\in\overline{\Bbb R^n}\;\;,
$$
(see e.g. \cite{BEG}, Theorem 5 pag. 444) where $\cdot$ denotes the scalar product in $\Bbb R^{n+1}$ . The spaces $\overline{\C}$ and $\overline{\C^2}$ are just the case $n=2$ and $n=4$ respectively, so we have a notion of distance on them, allowing us to talk about diameter of their subsets. Namely, for a connected subset $\Omega\subseteq\overline{\R^n}$ we denote its diameter as
$
|\Omega|:=\sup\{d_n(x,y)\;:\;x,y\in\Omega\}
$ and if $\Omega=\bigcup_j\omega_j$, where $\omega_j$ are pairwise disjoint connected subsets of $\overline{\R^n}$ then we define $|\Omega|:=\sup_j|\omega_j|$.

\subsection{Definitions and technical results}\label{def}
We will consider the Riemann Sphere $\overline \C:=\C\cup\{\infty\}$ and the one-point compactification of $\C^2$ as $\overline{\C^2}:=\C^2\cup\{\infty_2\}$.\\
\newline
Given a subset $B\subset\C^2$ and a positive number $\alpha>0$, we define the open subset
$$
B(\alpha):=\{z\in\C^2\;:\;\dist(z,B)<\alpha\}\;.
$$
Given a subset $\Omega$ of $\overline \C$ (resp. $\overline{\C^2}$), we say that $\Omega$ is \emph{bounded} if $\Omega\cap U=\emptyset$ for some $U$ open neighborhood of $\infty$ (resp. of $\infty_2$), \emph{unbounded} otherwise.\\
\newline
We consider $\{\epsilon_n\}_{n\ge1}, \{R_n\}_{n\ge0}$ strictly monotone sequences of positive real numbers, with $\epsilon_n\to0$ and $R_n\to+\infty$. Consider moreover $\{a_n\}_{n\ge1}\subset\C$ such that $R_{n-1}<|a_n|<R_n$.\\
We will refer to these sequences as the \emph{parameters} of the construction.\\
Fixing these sequences, the sets
\begin{align*}
D_n:=&\{|z|<R_n\}\subset\C\;\;\\
C_n^v:=&D_n\times\C\;\;\\
C_n^h:=&\C\times D_n\;\;\\
C_n:=&C_n^v\cup C_n^h \;\;\\
B_n:=&\overline{C_n^v\cap C_n^h}=\overline{D}_n\times \overline{D}_n \;\;\\
\end{align*}
and the rational shears $f_n\colon\C^2\rightarrow\overline{\C^2}$\;,
\begin{align*}
f_n(x,y):=
	\begin{cases}
		\left(x,y+g_n(x)\right)\;\;\mbox{for odd}\;\;n\;\;\\
		\left(x+g_n(y),y\right)\;\;\mbox{for even}\;\;n\;\;,
	\end{cases}
\end{align*}
where
$$
g_n(z):=\frac{\epsilon_n}{z-a_n}\;,
$$
are defined. Finally define the sequence of continuous mappings $\gamma_n\colon\overline\C\to\overline{\C^2}$ as
\begin{align*}
\gamma_n(z)&:=f_n\circ\cdots\circ f_1(z,0),\;\;z\neq\infty\\
\gamma_n(\infty)&:=\infty_2
\end{align*}
and consequently the sets
$$
A_n:=\gamma_n(\overline\C)=f_n\circ\cdots\circ f_1(\C\times\{0\})\cup\{\infty_2\}\;.
$$
The construction of the proper holomorphic embedding $\gamma$ and of the Cantor set $C$ in Theorem \ref{main} relies on the sets and functions just defined: we will see that a suitable choice of the sequences $\{\epsilon_n\}_n,\;\{R_n\}_n$ and $\{a_n\}_n$ allows to obtain $C$ as the closure of the set $\bigcup_n\gamma_n^{-1}(\infty_2)$ and the limit $\lim_n\gamma_n$ to be a proper holomorphic embedding $\gamma$ on $\overline\C\setminus C$ into $\C^2$; in particular the faster $\{R_n\}_n$ diverges, the smaller the size of $C$, so that it can be constructed to have zero Hausdorff dimension.\\
\newline
In the rest of this paper $\{k_n\}_{n\ge0}$ will denote the Fibonacci sequence, that is the sequence of natural numbers defined by $k_0=k_1=1$ and
$$
k_{n+2}=k_{n+1}+k_n,\;\;\; \forall n\ge0\;.
$$
The following remark gathers some known fact about the number of solutions of rational equations (see 4.23--4.25, \cite{F}), that will be fundamental in the proof of the subsequent Lemma.
\begin{rk}\label{rational}
Let $r\colon\overline\C\to\overline\C$ be a rational function. There exists a natural number $k$, called \emph{degree} of $r$ and denoted as $\deg r$, such that for every $a\in\overline\C$, the equation $r(z)=a$ has $k$ solutions counted with their multiplicity, which are multiple only for a finite number of $a\in\overline\C$.
\begin{enumerate}
\item \label{zerisemplici}
In particular, up to a slightly small perturbation of $a$, the zeros of $r(z)-a$ are simple.
\item\label{polisemplici}
The zeros of $r$ are as many as the poles, counted with their multiplicity, therefore the poles of $r$ are all simple if and only if they are $k$ distinct.
\item\label{disjointsolutions}
Let $t$ be another rational function and $w\in\overline{\C}$ fixed. Up to perturb $a\in\C$ arbitrarily little, the equations $t=w$ and $r-a=0$ have no common solutions, as it is easily seen observing that $r^{-1}(a)\cap r^{-1}(a')=\emptyset$ for any $a'\neq a$.
\end{enumerate}
\end{rk}
\begin{lem}\label{tech}
Define a sequence of rational functions recursively:
\begin{align}
r_0(z)&:=z\nonumber\\
r_1(z)&:=g_1(z)\nonumber\\
r_{n+2}(z)&:=r_n(z)+g_{n+2}(r_{n+1}(z))=r_n(z)+\frac{\epsilon_{n+2}}{r_{n+1}(z)-a_{n+2}},\;\;n\ge0.\label{recursive}
\end{align}
Then, up to perturbing the elements of any sequence $\{a_n\}_n\subset\C$ arbitrarily little, the following hold:
\begin{enumerate}[label=(\roman*)]
\item\label{unsharedpoles}
for every $n\ge0$, $r_n$ and $r_{n+1}$ do not share any pole and $r_n^{-1}(\infty)\subsetneq r_{n+2}^{-1}(\infty)$; in particular $\infty$ is a pole only for $r_n$ with $n$ even;
\item\label{numberofsimplesolutions} 
the poles of $r_n$ are $k_n$ and all simple; therefore the equation
$$
r_n(z)=a_{n+1}
$$
has exactly $k_n$ solutions, all distinct.
\end{enumerate}
\end{lem}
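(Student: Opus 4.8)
The plan is to prove both assertions simultaneously by induction on $n$, carrying along the slightly strengthened hypothesis that $\deg r_n=k_n$ with all $k_n$ poles simple and that $\infty$ is a pole of $r_n$ exactly when $n$ is even. The base cases are immediate: $r_0(z)=z$ has the single simple pole $\infty$, and $r_1(z)=\epsilon_1/(z-a_1)$ has the single simple pole $a_1$, so the counts are $k_0=k_1=1$, the parity is correct, and $r_0,r_1$ share no pole since $a_1\in\C$. By Remark \ref{rational}\eqref{polisemplici} the number of poles equals the degree once the poles are simple, so the solution count $k_n$ for $r_n(z)=a_{n+1}$ in part \ref{numberofsimplesolutions} will follow from $\deg r_n=k_n$, the distinctness being arranged by Remark \ref{rational}\eqref{zerisemplici}.

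For the inductive step I would read the poles of $r_{n+2}=r_n+g_{n+2}\circ r_{n+1}$ directly off \eqref{recursive}; there are exactly two sources. The summand $r_n$ contributes its own pole set $P_n:=r_n^{-1}(\infty)$, of cardinality $k_n$. The summand $g_{n+2}(r_{n+1}(z))=\epsilon_{n+2}/(r_{n+1}(z)-a_{n+2})$ has poles precisely at the solution set $S:=r_{n+1}^{-1}(a_{n+2})$ of $r_{n+1}(z)=a_{n+2}$; at a pole of $r_{n+1}$ one has $r_{n+1}=\infty$ and hence $g_{n+2}\circ r_{n+1}=0$, so poles of $r_{n+1}$ produce no pole here. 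Using $\deg r_{n+1}=k_{n+1}$ (inductive hypothesis) together with Remark \ref{rational}\eqref{zerisemplici}, an arbitrarily small perturbation of $a_{n+2}$ makes the $k_{n+1}$ points of $S$ pairwise distinct, each a simple pole of $g_{n+2}\circ r_{n+1}$.

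The crux is to guarantee that $P_n$ and $S$ do not overlap, so that no cancellation occurs and the two counts add. I would perturb $a_{n+2}$ once more, via Remark \ref{rational}\eqref{disjointsolutions} applied with $t=1/r_n$, $w=0$, $r=r_{n+1}$, to force $a_{n+2}\notin r_{n+1}(P_n)$; this is legitimate because each $p\in P_n$ is \emph{not} a pole of $r_{n+1}$ by the inductive hypothesis $P_n\cap P_{n+1}=\emptyset$, so $r_{n+1}(p)$ is a finite value that $a_{n+2}$ can avoid. With $P_n\cap S=\emptyset$ secured, near each $p\in P_n$ the term $g_{n+2}\circ r_{n+1}$ is holomorphic while near each $s\in S$ the term $r_n$ is holomorphic, so $r_{n+2}$ has a simple pole at every point of $P_n\sqcup S$ and nowhere else. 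Hence $r_{n+2}$ has exactly $k_n+k_{n+1}=k_{n+2}$ simple poles, $P_{n+2}=P_n\sqcup S$, which establishes part \ref{numberofsimplesolutions} for $n+2$; moreover $P_n\subsetneq P_{n+2}$ because $S\neq\emptyset$, which is the inclusion in part \ref{unsharedpoles}.

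It then remains to propagate the two qualitative claims of \ref{unsharedpoles}. For the position of $\infty$: when $n$ is even, $\infty\in P_n\subset P_{n+2}$ (and the extra perturbation already forced $a_{n+2}\neq r_{n+1}(\infty)$, since $\infty\in P_n$), whereas when $n$ is odd, $\infty\notin P_n$ and $r_{n+1}(\infty)=\infty\neq a_{n+2}$ keeps $\infty\notin S$; either way $\infty\in P_{n+2}$ iff $n+2$ is even. That $r_{n+1}$ and $r_{n+2}$ share no pole is automatic: $P_{n+1}\cap P_n=\emptyset$ by hypothesis, and $P_{n+1}\cap S=\emptyset$ because $S=r_{n+1}^{-1}(a_{n+2})$ and $P_{n+1}=r_{n+1}^{-1}(\infty)$ are preimages of the distinct points $a_{n+2}\neq\infty$. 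Since every perturbation of $a_{n+2}$ is arbitrarily small and affects no $r_m$ with $m\le n+1$, they can be performed one index at a time, completing the induction. The single delicate point is the disjointness $P_n\cap S=\emptyset$: it is exactly what rules out pole cancellation and thereby keeps the degree on the Fibonacci track, and it is the only place where a genuine (if arbitrarily small) perturbation of the parameters is required.
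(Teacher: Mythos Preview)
Your proof is correct and follows essentially the same approach as the paper: both identify the poles of $r_{n+2}$ via \eqref{recursive} as the disjoint union of $r_n^{-1}(\infty)$ and $r_{n+1}^{-1}(a_{n+2})$, use Remark \ref{rational} to perturb $a_{n+2}$ so that these sets are disjoint and the latter consists of $k_{n+1}$ simple points, and then count $k_n+k_{n+1}=k_{n+2}$ simple poles. The only difference is organizational: the paper first runs a separate short induction for the disjointness $r_n^{-1}(\infty)\cap r_{n+1}^{-1}(\infty)=\emptyset$ and then proves \ref{numberofsimplesolutions}, whereas you fold everything into a single induction and spell out the parity of $\infty$ and the verification $P_{n+1}\cap P_{n+2}=\emptyset$ a bit more explicitly.
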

\begin{proof}
We see that $r_n^{-1}(\infty)\cap r_{n+1}^{-1}(\infty)=\emptyset$ for all $n\ge0$ by induction on $n$. The base case follows immediately. Assume the claim true for $n$; if by contradiction $\exists w\in r_{n+1}^{-1}(\infty)\cap r_{n+2}^{-1}(\infty)$, then by (\ref{recursive}), it must be $r_{n}(w)=\infty$, so $r_n^{-1}(\infty)\cap r_{n+1}^{-1}(\infty)\neq\emptyset$, against inductive hypothesis.\\
The inclusion $r_n^{-1}(\infty)\subsetneq r_{n+2}^{-1}(\infty)$ for all $n\ge0$ follows by looking at (\ref{recursive}) as from (\ref{disjointsolutions}) in Remark \ref{rational}, up to slightly perturbing $a_{n+2}$, one gets $\{r_n=\infty\}\cap\{r_{n+1}-a_{n+2}=0\}=\emptyset$.\\
Clearly $\infty$ is a pole for $r_0$, thus what said so far ensures that $\infty$ is a pole for $r_{2k}$ and not a pole for $r_{2k+1}$, for all $k\ge0$. So \ref{unsharedpoles} is proved.

Let us prove \ref{numberofsimplesolutions} by induction on $n$.
This is trivial for $r_0$ and $r_1$.
Assume the claim true up to $n+1$. As observed in (\ref{disjointsolutions}) in Remark \ref{rational}, up to a slightly small perturbation of $a_{n+2}$, the sets $\{r_n=\infty\}$ and $\{r_{n+1}-a_{n+2}=0\}$ have empty intersection.
Hence, by (\ref{recursive}), the poles of $r_{n+2}$ are precisely the poles of $r_n$ (which are $k_n$ and simple by inductive hypothesis) and the poles of $\frac{\epsilon_{n+2}}{r_{n+1}-a_{n+2}}$ (which are $k_{n+1}$ and simple as the zeros of the denominator are, up to a slightly small perturbation of $a_{n+2}$ as pointed out in (\ref{zerisemplici}) in Remark \ref{rational}, in fact $\deg r_{n+1}=k_{n+1}$ by inductive hypothesis). Therefore the poles of $r_{n+2}$ are $k_{n}+k_{n+1}=k_{n+2}$ and all simple, hence by (\ref{polisemplici}) in Remark \ref{rational} one has $\deg r_{n+2}= k_{n+2}$. So up to a slightly small perturbation of $a_{n+3}$, (\ref{zerisemplici}) guarantees that the equation $r_{n+2}(z)=a_{n+3}$ has $k_{n+2}$ solutions, all distinct.
So \ref{numberofsimplesolutions} is proved.
\end{proof}

We will assume from now on to choose $R_n$ large enough, so that $r_n(z)=a$ has $k_n$ distinct solutions for every $|a|\ge R_n$. The following Lemma is used in Section \ref{pp}:
\begin{lem}\label{analysis}
Let $r$ be a rational function with a finite number of poles $r^{-1}(\infty)=\{w_1,\dots,w_N\}\subset\overline\C$, all simple. Assume that $r(z)=a$ has $k$ distinct solutions for every $|a|\ge T_0$, for some $T_0$.
Then for any $T\ge T_0$ large enough, the set
\begin{equation*}
\Omega(r,T):=\{z\in\overline\C\setminus r^{-1}(\infty)\;:\;|r(z)|>T\}
\end{equation*}
is the disjoint union of $k$ connected sets, each of which homeomorphic to the punctured open disk $\{|z|<1\}\setminus\{0\}$ and containing exactly one solution of $r(z)=a$, for every $|a|>T$.
Moreover, such $k$ sets can be taken to have arbitrarily small diameters, provided that $T$ is large enough. 
\end{lem}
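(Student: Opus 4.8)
The plan is to recognise $\Omega(r,T)$ as the preimage, minus the poles, of a disk neighbourhood of $\infty$ on the target sphere, and to analyse $r$ over that disk as an unbranched covering. Write $V_T:=\{w\in\overline\C:|w|>T\}$, which for every $T$ is an open topological disk on the Riemann sphere (a neighbourhood of $\infty$), and in particular is simply connected. The hypothesis that $r(z)=a$ has $k$ distinct, hence simple, solutions for every $|a|\ge T_0$ says precisely that $r$ has no critical value in $V_{T_0}$ and that $\deg r=k$. Since the poles $w_1,\dots,w_N$ are all simple, they are exactly the $k$ distinct preimages of $\infty\in V_{T_0}$, so in fact $N=k$.

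Next I would show that for every $T\ge T_0$ the restriction
$$ r\colon r^{-1}(V_T)\longrightarrow V_T $$
is a $k$-sheeted covering map. It is a local homeomorphism because $V_T$ contains no critical value and the poles, being simple, are unramified preimages of $\infty$, so $r$ is unbranched over all of $V_T$; and it is proper, since $\overline\C$ is compact and $r$ is continuous. A proper local homeomorphism onto a connected base is a covering, and a covering of the simply connected space $V_T$ is trivial: $r^{-1}(V_T)$ splits as a disjoint union $U_1\sqcup\cdots\sqcup U_k$ of open sets, each mapped homeomorphically onto $V_T$ by $r$. Labelling so that $w_j\in U_j$, each $U_j$ is the unique component containing exactly one pole and exactly one preimage of every $a\in V_T$. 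Removing the poles gives $\Omega(r,T)=\bigsqcup_{j=1}^k (U_j\setminus\{w_j\})$, and since $r$ carries $U_j\setminus\{w_j\}$ homeomorphically onto $V_T\setminus\{\infty\}=\{|w|>T\}$, a punctured disk, each component is homeomorphic to $\{0<|z|<1\}$ and contains precisely one solution of $r(z)=a$ for every $|a|>T$. This settles the structural part for all $T\ge T_0$.

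For the diameter statement I would fix $T_0$ and observe that for $T'\ge T\ge T_0$ the inclusion $V_{T'}\subset V_T$ forces each component $U_j(T')$ into the component $U_j(T)$ containing the same pole $w_j$; thus, for each fixed $j$, the closures $\overline{U_j(T)}$ form a nested family of compact subsets of $\overline\C$ decreasing as $T\to\infty$. Their intersection is exactly $\{w_j\}$: no other pole lies in $\overline{U_j(T)}$ because the $U_i$ are pairwise disjoint open sets, and any non-pole $z$ has $|r(z)|<\infty$, hence lies in the open set $\{|r|<T\}$ for $T>|r(z)|$ and is excluded from $\overline{U_j(T)}$. By compactness of $\overline\C$ in the spherical metric $d_2$, a nested family of non-empty compact sets with one-point intersection has diameters tending to $0$; otherwise, along a sequence $T\to\infty$, one could extract two sequences of points staying at distance bounded below whose limits would both lie in $\bigcap_T\overline{U_j(T)}=\{w_j\}$, a contradiction. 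Choosing $T$ large enough then makes all $k$ diameters as small as desired.

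The main obstacle I anticipate is the clean verification that $r$ is genuinely a covering over $V_T$ up to and including the point $\infty$ — in particular that simplicity of the poles is exactly what renders $r$ unbranched over $\infty$ on the source side — together with the bookkeeping needed to label the $k$ components consistently across different thresholds $T$, so that the compactness argument for the diameters can be run componentwise.
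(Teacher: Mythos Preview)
Your argument is correct and takes a genuinely different route from the paper's. The paper works locally: near each pole $w_j$ it writes $r(z)=h_j(z)(z-w_j)^{-1}$ (or $h_j(z)z$ if $w_j=\infty$) with $h_j$ holomorphic and nonvanishing, and then argues directly that the sets $X_j=\{z\in V_j\setminus\{w_j\}:|r(z)|>T\}$ are pairwise disjoint, connected, shrink as $T$ grows, and each contains a unique solution of $r(z)=a$ (the last by noting that two solutions in one $X_j$ for arbitrarily large $T$ would force $w_j$ to be a multiple pole). You instead observe globally that $r\colon r^{-1}(V_T)\to V_T$ is a proper local homeomorphism, hence a covering of a simply connected base, hence trivial; this immediately gives the decomposition into $k$ sheets and, for free, the homeomorphism of each component with a punctured disk --- a point the paper's proof leaves essentially implicit. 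Your nested--compacts argument for the diameters is also cleaner than the paper's appeal to the local expression. What the paper's approach buys is that it avoids invoking covering--space theory, keeping everything at the level of elementary local analysis; what yours buys is a more structural and complete proof, in particular a transparent reason for the punctured--disk topology and a uniform treatment that does not distinguish finite poles from the pole at $\infty$.
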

\begin{proof} 
Assume the poles of $r$ are all distinct; being them simple one has that $N=k$.
We express $r$ on a neighborhood $V_j\subset\overline\C$ of a pole $w_j$ as $r(z)=h_j(z)(z-w_j)^{-1}$ or $r(z)=h_j(z)z$, accordingly with $w_j$ to be different or equal to $\infty$, where $h_j\in\mathcal O(V_j)$ is bounded and never vanishing.
Let $a\in\C$ with $|a|>T$. The larger $T$, the closer $z$ has to be to the poles for $r(z)=a$ to be solved. In particular, up to enlarging $T$ we may assume that the $V_j$ are pairwise disjoint, therefore $X_j:=\{z\in V_j\setminus\{w_j\}\;:\;|r(z)|>T\}$ are pairwise disjoint and connected. In particular for each $\epsilon>0$, there exists $T>0$ sufficiently large such that the diameter of $X_j$ is less than $\epsilon$.
Moreover the union of the $X_j$ is $\Omega(r,T)$ and each $X_j$ contains precisely one of the $k$ solutions of the equation $r(z)=a$ for any $|a|>T$, for $T$ sufficiently large: if not, there must be one of them, say $X_1$, such that for any $T$ sufficiently large it contains at least two solutions of $r(z)=a$ for some $|a|>T$, leading $w_1$ to be a multiple pole for $r$.
\end{proof}

Finally, it is easily seen by induction that 
\begin{align}\label{rat}
\gamma_n=
	\begin{cases}
		\left(r_{n-1},r_{n}\right)\;\;\mbox{for odd}\;\;n\;\;\\
		\left(r_n,r_{n-1}\right)\;\;\mbox{for even}\;\;n\;\;,
	\end{cases}
\end{align}
in fact the case $n=1$ is immediate and for $n$ even, for example
$$
\gamma_{n+1}=f_{n+1}(\gamma_n)=f_{n+1}(r_n,r_{n-1})=(r_n,r_{n-1}+g_{n+1}(r_n))=(r_n,r_{n+1})\;.
$$
\vspace{0cm}
\section{General Properties}
This section is devoted to the description of the objects introduced in Section \ref{def}, which are the ingredients of the construction of the proper holomorphic embedding $\gamma\colon\overline{\C}\setminus C\hookrightarrow\C^2$ presented in \cite{O}. In particular we will focus on the choice of the three sequences $\{\epsilon_n\}_{n\ge1},\{R_n\}_{n\ge0},\{a_n\}_{n\ge1}$ upon which the whole construction is based.


\subsection{On the set $\gamma_n^{-1}(\infty_2)$ and the sequence $\{R_n\}_{n\ge0}$. Definition and properties of vertical and horizontal components of $A_n$}\label{vhcomp}
It follows from (\ref{rat}) and \ref{unsharedpoles} in Lemma \ref{tech} that
\begin{equation}\label{incl}
\gamma_{n}^{-1}(\infty_2)\subsetneq\gamma_{n+1}^{-1}(\infty_2)
\end{equation}
holds true for all $n\ge0$; moreover $\infty\in\gamma_{n}^{-1}(\infty_2)$ for every $n\ge0$, so we can write
\begin{equation}\label{elem}
\gamma_{n}^{-1}(\infty_2)=\{\infty\}\cup\{t_j^{(n)}\}_{j=1}^{b_n-1}
\end{equation}
for some $t_j^{(n)}\in\C$, where $b_n=k_{n+1}$.
Observe that $A_n=\gamma_n(\overline{\C})\subset\overline{\C^2}$ is compact with $\infty_2\in A_n$; moreover $A_n\setminus\{\infty_2\}=\gamma_n({\C})$ is a connected complex curve in $\C^2$.
\begin{prop}\label{gen}
Assume $\epsilon_1,\dots,\epsilon_n$, $a_1,\dots,a_n$ and $R_1,\dots,R_{n-1}$ fixed, $|a_{j+1}|>R_{j}>|a_{j}|$ for $j=1,\dots,n-1$. Then there exist $R_n>|a_n|$ such that what follows is true.
\begin{enumerate}[label=(\alph*)]
	\item\label{disjcomp} $A_n\setminus (B_n\cup\{\infty_2\})$ has $b_n$ disjoint components.
	\item\label{split} 
	$A_n\subset C_n$.
\end{enumerate}
We refer to the components of $A_n\setminus(B_n\cup\{\infty_2\})$ contained in $C_n^{h}$ (resp. in $C_n^{v}$) as the \emph{horizontal} (resp. \emph{vertical}) components of $A_n$.\\
Correspondingly we split $b_n$ as $h_n+v_n$. It turns out that
\begin{align}\label{vh}
  \begin{cases}
      v_n=k_{n-1}\\
      h_n=k_n
    \end{cases} (n\;\; \text{even}),\hspace{1cm}
                \begin{cases}
      			v_n=k_n\\
      			h_n=k_{n-1}
    \end{cases} (n\;\; \text{odd}).       
\end{align}
\end{prop}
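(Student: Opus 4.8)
The plan is to read $\gamma_n$ off in coordinates via (\ref{rat}) and to localize the escape to $\infty_2$ at the poles of the two rational functions $r_{n-1}$ and $r_n$, applying Lemma \ref{analysis} to each separately. Throughout I fix $n$ odd, so that $\gamma_n=(r_{n-1},r_n)$; the even case is symmetric, with the roles of $r_{n-1},r_n$ (hence of $C_n^v,C_n^h$) interchanged. By Lemma \ref{tech}\ref{numberofsimplesolutions} the function $r_{n-1}$ has exactly $k_{n-1}$ simple poles and $r_n$ has exactly $k_n$ simple poles, and by \ref{unsharedpoles} these two pole sets are disjoint. Since a finite point is sent to $\infty_2$ precisely when one of its coordinates is infinite, $\gamma_n^{-1}(\infty_2)\setminus\{\infty\}=r_{n-1}^{-1}(\infty)\cup r_n^{-1}(\infty)$ has $k_{n-1}+k_n=k_{n+1}=b_n$ points, in agreement with (\ref{elem}).

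First I would fix the threshold. Applying Lemma \ref{analysis} to $r_{n-1}$ and to $r_n$, for $R_n$ large enough the set $\{|r_{n-1}|>R_n\}=\Omega(r_{n-1},R_n)$ splits into $k_{n-1}$ punctured-disk neighborhoods of arbitrarily small diameter, one around each pole of $r_{n-1}$, and likewise $\Omega(r_n,R_n)$ splits into $k_n$ small neighborhoods of the poles of $r_n$. Because the two pole sets are disjoint, enlarging $R_n$ further (this also gives $R_n>|a_n|$ automatically) I can force all $k_{n-1}+k_n$ neighborhoods to be pairwise disjoint, and even the closed regions $\{|r_{n-1}|\ge R_n\}$ and $\{|r_n|\ge R_n\}$ to be disjoint. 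Moreover, since $r_n$ is holomorphic and bounded near every pole of $r_{n-1}$, on $\Omega(r_{n-1},R_n)$ one has $|r_n|<R_n$ once $R_n$ exceeds these bounds, and symmetrically $|r_{n-1}|<R_n$ on $\Omega(r_n,R_n)$. This already yields \ref{split}: the complement of $C_n$ in $\C^2$ is $\{|x|\ge R_n\}\cap\{|y|\ge R_n\}$, so a finite point $\gamma_n(z)=(r_{n-1}(z),r_n(z))$ escapes $C_n$ only if $z\in\{|r_{n-1}|\ge R_n\}\cap\{|r_n|\ge R_n\}=\emptyset$; hence $A_n\setminus\{\infty_2\}\subset C_n$.

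For \ref{disjcomp} I would transfer the component count through $\gamma_n$. The preimage of $A_n\setminus(B_n\cup\{\infty_2\})$ among the finite non-polar points is exactly $U:=\Omega(r_{n-1},R_n)\cup\Omega(r_n,R_n)$, a disjoint union of $k_{n-1}+k_n=b_n$ punctured disks. The essential point is that $\gamma_n$ is injective off $\gamma_n^{-1}(\infty_2)$: from the pair $(r_{n-1},r_n)$ the recursion (\ref{recursive}) recovers $r_{n-2}=r_n-g_n(r_{n-1})$, hence inductively the pair $(r_0,r_1)=(z,g_1(z))$ and thus $z$ itself. Consequently $\gamma_n\colon\overline\C\to A_n$ is a continuous surjection from a compact space to a Hausdorff one which is injective except that the finite fibre $\gamma_n^{-1}(\infty_2)$ is collapsed to $\infty_2$; it therefore induces a homeomorphism $\overline\C/\gamma_n^{-1}(\infty_2)\xrightarrow{\ \sim\ }A_n$, restricting to a homeomorphism $\overline\C\setminus\gamma_n^{-1}(\infty_2)\cong A_n\setminus\{\infty_2\}$ that carries $U$ onto $A_n\setminus(B_n\cup\{\infty_2\})$. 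Hence the latter has exactly $b_n$ connected components, proving \ref{disjcomp}.

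Finally the horizontal/vertical split is read off from the two estimates. The $k_{n-1}$ components of $U$ coming from $\Omega(r_{n-1},R_n)$ satisfy $|r_n|<R_n$, so their images have second coordinate in $D_n$ and lie in $C_n^h$: these are the horizontal components, whence $h_n=k_{n-1}$; the $k_n$ components coming from $\Omega(r_n,R_n)$ satisfy $|r_{n-1}|<R_n$, so their images lie in $C_n^v$ and are vertical, whence $v_n=k_n$. This is precisely (\ref{vh}) for $n$ odd, the even case following by swapping the two factors. I expect the main obstacle to be the topological bookkeeping around $\infty_2$: one must guarantee that collapsing the entire finite fibre $\gamma_n^{-1}(\infty_2)$ to a single point does not merge distinct ends, which is exactly where the injectivity of $\gamma_n$ off its polar set — obtained by inverting the shear recursion — does the decisive work.
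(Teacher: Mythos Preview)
Your argument is correct and follows the same route as the paper: localize via Lemma~\ref{analysis} the sets $\{|r_{n-1}|>R_n\}$ and $\{|r_n|>R_n\}$ around the (disjoint, simple) pole sets of $r_{n-1}$ and $r_n$, use disjointness of the pole sets from Lemma~\ref{tech}\ref{unsharedpoles} to get \ref{split}, and push the component count through the injectivity of $\gamma_n$ off its polar set to get \ref{disjcomp} and the split (\ref{vh}). The only slip is cosmetic: in the line $\gamma_n^{-1}(\infty_2)\setminus\{\infty\}=r_{n-1}^{-1}(\infty)\cup r_n^{-1}(\infty)$ you should drop the ``$\setminus\{\infty\}$'', since for odd $n$ the point $\infty$ is already one of the $k_{n-1}$ poles of $r_{n-1}$; the cardinality count $k_{n-1}+k_n=b_n$ and everything downstream are unaffected.
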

\begin{proof}
The components of $A_n\setminus(B_n\cup\{\infty_2\})$ are the image, via $\gamma_n$ of punctured neighborhoods in $\overline \C$ of the points (\ref{elem}) (which are defined once $a_n,\epsilon_n,a_j,\epsilon_j,R_j,\;\;j\le n-1$ are fixed). So, by Lemma \ref{analysis}, the bigger $R_n$, the smaller these neighborhoods; hence these $b_n$ neighborhoods are disjoint for $R_n$ large enough, achieving \ref{disjcomp}, as $\gamma_n$ is injective on $\overline\C\setminus\gamma_n^{-1}(\infty_2)$ (see section \ref{sets}).\\
Looking at (\ref{rat}) it is clear that unbounded components of $A_n=\gamma_n(\overline\C)$ are due to the poles of $r_n$ and $r_{n-1}$; being $r_{n}^{-1}(\infty)\cap r_{n-1}^{-1}(\infty)=\emptyset$ by \ref{unsharedpoles} in Lemma \ref{tech}, it follows that when one component of $A_n$ becomes unbounded, the other remains bounded. Hence, for $R_n$ large enough, we get \ref{split}.\\
Finally, (\ref{vh}) follows directly from Lemma \ref{tech}, once we express $\gamma_n$ as in (\ref{rat}).
\end{proof}

\subsection{Definition of certain sets $\Delta_n,\;K_n$ and some properties of the mappings $\gamma_n$}\label{sets}

Let us the define
$$
\Delta_n:=\gamma_n^{-1}(A_n\setminus B_n)\;.
$$
Given $U\subset\overline{\C^2}$ open neighborhood of $\infty_2$, $\gamma_n^{-1}(U)$ is a neighborhood of $\gamma_n^{-1}(\infty_2)$; this last set has $b_n$ points, therefore Lemma \ref{analysis} guarantees that for $U$ sufficiently small $\gamma_n^{-1}(U)$ is the disjoint union of $b_n$ open neighborhoods $U_j^{(n)}$ of the points (\ref{elem}) (meaning the points of $U$ are sufficiently close to $\infty_2$ with respect to the spherical measure introduced in section \ref{sph}; equivalently $U$ is contained in the complement of a closed ball in $\overline{\C^2}$ of sufficiently large radius). In particular $\forall\epsilon>0\;\exists U\subset\overline{\C^2}$ open neighborhood of $\infty_2$ such that $|U_j^{(n)}|<\epsilon$.\\
Since
\begin{equation}\label{delta}
\Delta_n=\gamma_n^{-1}(\overline{\C^2}\setminus B_n)
\end{equation}
(straightforward with double inclusion), we can always suppose to have $R_n$ large enough such that
$$
\Delta_n=\bigcup_{j=1}^{b_n}U_j^{(n)}
$$
is a disjoint union. In particular there exists $R_n$ so large that
\begin{equation}\label{small}
|U_j^{(n)}|<\frac1{b_n^{n}}\;,\;\;\;\;\forall j=1,\dots,b_n\;.
\end{equation}
Let us now check that
$$
\gamma_n\colon\overline\C\setminus\gamma_n^{-1}(\infty_2)\to\C^2
$$
is a holomorphic embedding by induction on $n$. As $\gamma_1\colon\overline\C\setminus\{a_1,\infty\}\to\C^2,\;\gamma_1(z)=(z,\frac{\epsilon_1}{z-a_1})$, the base case trivially follows.
Assume the claim true for $n$. Then $\gamma_{n+1}=f_{n+1}\circ\gamma_n$ is a holomorphic embedding in the intersection of the subset where $\gamma_n$ is such (that is $\overline\C\setminus\gamma_n^{-1}(\infty_2)$) and $\{z\in\overline\C\;:\;\gamma_n(z)\notin f_{n+1}^{-1}(\infty_2)\}=\overline\C\setminus\gamma_{n+1}^{-1}(\infty_2)$. Such an intersection is
$$
\overline \C\setminus\gamma_{n}^{-1}(\infty_2)
\cap\overline \C\setminus\gamma_{n+1}^{-1}(\infty_2)
=\overline \C\setminus(\gamma_{n}^{-1}(\infty_2)\cup\gamma_{n+1}^{-1}(\infty_2))
=\overline \C\setminus\gamma_{n+1}^{-1}(\infty_2)
$$
and we are done (the last equality follows from (\ref{incl})).

Define then the following compacts in $\overline\C$:
\begin{align*}
K_n&:=\overline\C\setminus\Delta_n=\gamma_n^{-1}(A_n\cap B_n)=\gamma_n^{-1}(B_n),\;\;n\ge1\\
K_0&:=\emptyset\;.
\end{align*}
It is straighforward to see that $K_n\subset \overline\C\setminus\gamma_n^{-1}(\infty_2)$. We will see in (\ref{kincr}) that $K_n\subsetneq K_{n+1}^{\circ}$; therefore there exists $\delta_n>0$ such that if $h\colon\overline\C\setminus\gamma_n^{-1}(\infty_2)\to\C^2$ is holomorphic with 
$$
\|\gamma_n-h\|_{K_n}<\delta_n\;,
$$
then $h$ is an embedding on $K_{n-1}$. Set $\delta_0:=1/4$ and assume without loss of generality $\{\delta_n\}_{n\ge0}$ to be strictly decreasing.


\subsection{From $\Delta_n$ to $\Delta_{n+1}$: definition of pair of pants}\label{pp}
We will prove that for a suitable choice of the parameters, the sequences $\{\Delta_n\}_n$ and $\{K_n\}_n$ are strictly decreasing and increasing respectively; in particular we will focus on the topological behavior of the sequence $\{\Delta_n\}_n$, when passing from step $n$ to step $n+1$.\\
\begin{prop}\label{prop:monotone}
Assume $\epsilon_1,\dots,\epsilon_{n+1}$, $a_1,\dots,a_{n+1}$ and $R_1,\dots,R_{n}$ fixed, with $|a_{j+1}|>R_{j}>|a_j|$ for $j=1,\dots,n$ and rename the $U_j^{(n)}$ to highlight which one comes from horizontal or vertical components of $A_n\setminus B_n$ as
$$
\Delta_n=\bigcup_{j=1}^{h_n}H_j^{(n)}\;\;\cup\;\;\;\bigcup_{j=1}^{v_n}V_j^{(n)}=H_n\cup V_n\;.
$$
Then, for any $R_{n+1}>|a_{n+1}|$ sufficiently large, we have that
\begin{align}
\label{deltadecr}&\Delta_{n+1}\subsetneq\Delta_n\\
\label{kincr}&K_{n}\subsetneq K_{n+1}^{\circ}\;
\end{align}
and passing from $\Delta_n$ to $\Delta_{n+1}$, for $n$ even, $V_j^{(n)}$ shrinks to $V_j^{(n+1)}$, $j=1,\dots,v_n$, while $H_j^{(n)}$ splits itself into $H_j^{(n+1)}$ and $V_{v_n+j}^{(n+1)}$, $j=1,\dots,h_n$, creating the so called \emph{pair of pants}.
For $n$ odd, a similar conclusion holds, swapping vertical and horizontal components.
\end{prop}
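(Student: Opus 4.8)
The plan is to analyze how the map $\gamma_{n+1} = f_{n+1}\circ\gamma_n$ modifies the decomposition of $\Delta_n$ into its horizontal and vertical pieces. The key observation, via the formula (\ref{rat}), is that the shear $f_{n+1}$ (for $n$ even, so $n+1$ odd) acts as $(x,y)\mapsto(x, y+g_{n+1}(x))$, i.e. it only perturbs the second coordinate $r_n \mapsto r_n + g_{n+1}(r_n) = r_{n+1}$ while leaving $r_{n-1}$ untouched. I would first recall from Lemma \ref{tech} and Proposition \ref{gen} that the pole structure determines which components are horizontal versus vertical: a vertical component $V_j^{(n)}$ corresponds to a neighborhood where $r_n\to\infty$ (first coordinate blows up), and a horizontal component $H_j^{(n)}$ corresponds to where $r_{n-1}\to\infty$. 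Since the poles of $r_{n-1}$ and $r_n$ are disjoint (by \ref{unsharedpoles}), on each $V_j^{(n)}$ the quantity $r_{n-1}$ stays bounded and $r_n$ is large.

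**For the vertical components**, I would argue that $f_{n+1}$ barely disturbs them. On $V_j^{(n)}$ the first coordinate $r_n$ is large (tending to $\infty$), so $g_{n+1}(r_n)=\epsilon_{n+1}/(r_n-a_{n+1})$ is small; hence $r_{n+1}=r_n+g_{n+1}(r_n)$ remains large and $r_{n-1}$ is unchanged, so $V_j^{(n)}$ stays a single vertical component, shrinking to $V_j^{(n+1)}$ as $R_{n+1}$ grows (invoking Lemma \ref{analysis} to control the diameter). The serious part is the \textbf{splitting of the horizontal components}, which is where the pair of pants appears. On a horizontal component $H_j^{(n)}$, the second coordinate $r_n$ is bounded while $r_{n-1}$ is large. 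Applying $f_{n+1}$, the new second coordinate is $r_{n+1}=r_n+g_{n+1}(r_n)$, and since $r_n$ is bounded on $H_j^{(n)}$ it will pass near the pole $a_{n+1}$ of $g_{n+1}$: precisely at the solutions of $r_n(z)=a_{n+1}$ the function $r_{n+1}$ blows up, creating new poles. By Lemma \ref{tech}\ref{numberofsimplesolutions} there are exactly $k_n=h_n$ such solutions (one detects here how the Fibonacci count regenerates), and each lies inside a distinct horizontal component, so each $H_j^{(n)}$ acquires exactly one new pole of $r_{n+1}$. This is the crux: near that new pole the set $\{|r_{n+1}|>T\}$ now has a component on which $r_{n+1}\to\infty$, i.e. a genuinely new vertical component $V_{v_n+j}^{(n+1)}$, while the portion of $H_j^{(n)}$ where $r_{n-1}\to\infty$ survives as $H_j^{(n+1)}$.

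**To make the splitting rigorous**, I would apply Lemma \ref{analysis} to the rational function $r_{n+1}$, whose poles (by Lemma \ref{tech}) are $k_{n+1}$ in number and all simple, consisting of the old poles of $r_{n-1}$ together with the $k_n$ newly created ones. The set $\Delta_{n+1}=\gamma_{n+1}^{-1}(\overline{\C^2}\setminus B_{n+1})$ is, for $R_{n+1}$ large, a disjoint union of small punctured-disk neighborhoods, one around each point of $\gamma_{n+1}^{-1}(\infty_2)$, and these points are exactly the poles of $r_{n-1}$ and $r_{n+1}$. The inclusion $\Delta_{n+1}\subsetneq\Delta_n$ (\ref{deltadecr}) follows because enlarging $R_{n+1}$ shrinks each neighborhood $U_j^{(n+1)}$ strictly inside the corresponding $U_j^{(n)}$, using (\ref{incl}) to guarantee the pole sets grow strictly; dually (\ref{kincr}) follows by complementation, noting $K_n=\overline\C\setminus\Delta_n$ and that strict shrinking of $\Delta_{n+1}$ forces $K_n$ into the interior of $K_{n+1}$.

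**The main obstacle** I anticipate is the bookkeeping that identifies the newly created pole inside each horizontal component with a new \emph{vertical} component of $A_{n+1}$, and verifying that the ``surviving'' part remains horizontal — in other words, correctly tracking the topology of the level sets of $r_{n+1}$ as two distinct punctures emerge from one puncture, which is precisely the pair-of-pants phenomenon. Establishing that the count works out to $v_{n+1}=v_n+h_n=k_{n-1}+k_n=k_{n+1}$ and $h_{n+1}=h_n=k_n$ (for $n$ even), consistent with (\ref{vh}) at level $n+1$, is the delicate combinatorial checkpoint; the analytic estimates (smallness of $g_{n+1}$ on vertical pieces, diameter control) are routine applications of Lemma \ref{analysis} once the parameter $R_{n+1}$ is chosen large enough. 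The odd-$n$ case is entirely symmetric under exchange of the two coordinates.
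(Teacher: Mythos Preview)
Your overall strategy---express $\gamma_n,\gamma_{n+1}$ via the $r_j$ and track which rational function blows up on each piece---is exactly the paper's, but your execution has a systematic labeling error and a wrong recursion that together break the argument. For $n$ even one has $\gamma_n=(r_n,r_{n-1})$, so a \emph{horizontal} component (one lying in $C_n^h=\C\times D_n$) is where the \emph{first} coordinate $r_n$ is large: $H_n=\{|r_n|>R_n\}$, and $V_n=\{|r_{n-1}|>R_n\}$. You have this reversed. Moreover the recursion is $r_{n+1}=r_{n-1}+g_{n+1}(r_n)$, not $r_{n+1}=r_n+g_{n+1}(r_n)$ as you write twice; the second coordinate $r_{n-1}$ is the one being perturbed by $f_{n+1}$.

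These errors collide in your ``crux'' paragraph. You claim that on $H_j^{(n)}$ the function $r_n$ is bounded and therefore ``will pass near the pole $a_{n+1}$.'' But $|a_{n+1}|>R_n$, so the solutions of $r_n(z)=a_{n+1}$ lie precisely in $\{|r_n|>R_n\}$, i.e.\ where $r_n$ is \emph{large}, not bounded; with your labeling that would be $V_n$, not $H_n$. In the paper's (correct) labeling the picture is: on $V_n$ (where $r_{n-1}$ is large and $r_n$ bounded) the perturbation $g_{n+1}(r_n)$ is harmless and $V_n$ merely shrinks to $V'_{n+1}$; on $H_n$ (where $r_n$ is large) one finds both the poles of $r_n$ and the zeros of $r_n-a_{n+1}$, so $H_n$ splits into $H_{n+1}=\{|r_n|>R_{n+1}\}$ and the new vertical pieces $V''_{n+1}=\{|r_n-a_{n+1}|<\epsilon_{n+1}/R_{n+1}\}$. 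Once you correct the two identifications above your outline goes through along these lines; the diameter/inclusion statements via Lemma \ref{analysis} are fine.
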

\begin{proof}
Let $n$ be even. 
From (\ref{recursive}) and (\ref{rat}) we have 
\begin{align*}
\gamma_n&=(r_n,r_{n-1})\\
\gamma_{n+1}&=(r_n,r_{n+1})=(r_n,r_{n-1})+\left(0,\frac{\epsilon_{n+1}}{r_n-a_{n+1}}\right)\;,
\end{align*}
from which 
\begin{align*}
H_n&=\{|r_n|>R_n\},\;\;\\
V_n&=\{|r_{n-1}|>R_n\},\\
H_{n+1}&=\{|r_n|>R_{n+1}\}.
\end{align*}
Moreover, up to slightly move $a_{n+1}$, (\ref{disjointsolutions}) in Remark \ref{rational} guarantees that $r_{n-1}=\infty$ and $r_n=a_{n+1}$ have disjoint solutions, so for $R_{n+1}$ large enough we have
$$
V_{n+1}=\{|r_{n-1}|>R_{n+1}\}\cup\{|r_n-a_{n+1}|<\epsilon_{n+1}/R_{n+1}\}=:V'_{n+1}\cup V''_{n+1}\;,
$$
with $V'_{n+1}\cap V''_{n+1}=\emptyset$. Since $|a_{n+1}|>R_n$, it follows that $R_{n+1}$ sufficiently large implies $V''_{n+1}\subseteq H_n$; the inclusions $V'_{n+1}\subseteq V_n$ and $H_{n+1}\subseteq H_n$ are trivial, so we get (\ref{deltadecr}) and (\ref{kincr}), where the strict inclusions follow from (\ref{delta}) and Lemma \ref{analysis}, which guarantees the $U_j^{(n+1)}$ shrunk arbitrarily little around the point they are neighborhood of, provided $R_{n+1}$ to be large enough.
Taking $R_{n+1}$ large enough guarantees moreover that $V''_{n+1}$ has $k_n$ components (as the equation $r_n=a_{n+1}$ has $k_n$ simple solutions by \ref{numberofsimplesolutions} in Lemma \ref{tech}) and $H_{n+1}\cap V''_{n+1}=\emptyset$ (as $R_{n+1}>|a_{n+1}|$).
Now $H_n$ has $k_n$ components, each of which splits into a component of $H_{n+1}$ and a component of $V''_{n+1}$. So the statement on the pair of pants follows.

For $n$ odd the argument is the same, just switching the role of vertical and horizontal objects.
\end{proof}


\subsection{The sequence $\{\epsilon_n\}_{n\ge1}$}\label{epsilon}
Describing constraints for $R_n$ in sections \ref{vhcomp}, \ref{sets} and \ref{pp}, we assumed to have already fixed $a_1,\dots,a_{n}$, $R_0,\dots,R_{n-1}$, and in particular $\epsilon_1,\dots,\epsilon_{n}$. Let us now see how to choose the $\epsilon_n$.

\begin{prop}\label{epsss}
There are sequences $\{a_n\}_n$, $\{\epsilon_n\}_n$ and $\{R_n\}_n$ such that 
\begin{equation}\label{eps}
\|f_n-\id\|_{\overline{B_j(\frac12)}}<\delta_j\cdot\frac1{2^{j+n}}\;\;,\hspace{2cm}0\le j< n
\end{equation} 
\end{prop}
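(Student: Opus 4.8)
\emph{Proof proposal.} The plan is to reduce (\ref{eps}) to a single scalar estimate and then slot it into the inductive choice of parameters. The starting observation is that $f_n-\id$ lives in one coordinate only: for odd $n$ one has $f_n(x,y)-(x,y)=(0,g_n(x))$ and for even $n$ one has $f_n(x,y)-(x,y)=(g_n(y),0)$, so in both cases
$$\|f_n-\id\|_{\overline{B_j(\frac12)}}=\sup\frac{\epsilon_n}{|z-a_n|},$$
where $z$ denotes the first coordinate $x$ (for odd $n$) or the second coordinate $y$ (for even $n$) of a point of $\overline{B_j(\tfrac12)}$. Since $B_j=\overline D_j\times\overline D_j$, every point of the closed $\tfrac12$-neighbourhood $\overline{B_j(\frac12)}$ has both coordinates of modulus at most $R_j+\tfrac12$; hence $|z-a_n|\ge|a_n|-|z|\ge|a_n|-R_j-\tfrac12$, and (\ref{eps}) is implied by the finitely many inequalities
$$\epsilon_n<\delta_j\cdot\frac1{2^{j+n}}\bigl(|a_n|-R_j-\tfrac12\bigr),\qquad 0\le j<n.$$

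Next I would make explicit the order in which parameters get fixed, so that the above becomes a genuine upper bound on $\epsilon_n$. Arguing inductively, suppose $R_0,\dots,R_{n-1}$, $a_1,\dots,a_{n-1}$, $\epsilon_1,\dots,\epsilon_{n-1}$ (hence $\gamma_1,\dots,\gamma_{n-1}$, the compacts $K_1,\dots,K_{n-1}$, and the constants $\delta_0=\tfrac14,\delta_1,\dots,\delta_{n-1}$) have already been chosen. First pick $a_n$ subject to the standing requirement $R_{n-1}<|a_n|<R_n$, sharpened to $|a_n|>R_{n-1}+\tfrac12$ and generic in the sense of Lemma \ref{tech}; the extra $\tfrac12$ costs nothing, as $R_n$ is still free and only fixed (large) afterwards. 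With $a_n$ fixed, the right-hand sides above are finitely many strictly positive constants, since they involve only $\delta_j$ and $R_j$ for $j<n$, all already determined; so their minimum $M_n>0$ is positive, and I simply take any
$$0<\epsilon_n<\min\Bigl\{M_n,\ \epsilon_{n-1},\ 2^{-n}\Bigr\},$$
the last two conditions guaranteeing that $\{\epsilon_n\}_n$ is strictly decreasing with $\epsilon_n\to0$, as demanded of the parameters. Only after $\epsilon_n$ is fixed do I choose $R_n>|a_n|$ large enough to satisfy the constraints of Propositions \ref{gen} and \ref{prop:monotone} and Lemma \ref{analysis}; this closes the induction and defines $\delta_n$.

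The point that must be checked rather than computed is the absence of circularity: (\ref{eps}) must not secretly depend on quantities chosen after $\epsilon_n$. This is exactly why the coordinate bound was routed through $R_j$ with $j<n$ rather than through $R_n$: the left-hand side $f_n-\id$ depends only on $\epsilon_n$ and $a_n$, while the right-hand side depends only on $\delta_j$ and $R_j$ with $j<n$, so neither $R_n$ nor $\delta_n$ enters. Hence the estimate genuinely constrains $\epsilon_n$ alone and can be imposed at the stage described, the positivity $|a_n|-R_j-\tfrac12\ge|a_n|-R_{n-1}-\tfrac12>0$ being what makes each bound usable. The only remaining thing to remark is that the arbitrarily small perturbations of $a_n$ required by Lemma \ref{tech} are compatible with $|a_n|>R_{n-1}+\tfrac12$, which is clear. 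I expect this bookkeeping of the dependency order to be the sole subtlety; the analytic content is just the triangle inequality.
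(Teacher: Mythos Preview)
Your proof is correct and follows the same inductive scheme as the paper: at step $n$ one first fixes $a_n$ safely beyond the previously defined $B_j$'s, then takes $\epsilon_n$ small enough so that $g_n$ is uniformly tiny on each $\overline{B_j(\tfrac12)}$ with $j<n$, and only afterwards chooses $R_n$ large. The one cosmetic difference is that you impose the bound for every $j<n$ separately, whereas the paper imposes only the single strongest constraint $\|f_{n+1}-\id\|_{\overline{B_n(1/2)}}<\delta_n\,2^{-(2n+1)}$ (taking $|a_{n+1}|>\sqrt{2}\,R_n+\tfrac12$) and then lets the monotonicities $\overline{B_j(\tfrac12)}\subset\overline{B_n(\tfrac12)}$ and $\delta_n\le\delta_j$ handle the remaining $j$.
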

\begin{proof} Let us construct the three sequences inductively on $n$.
Case $n=1$; take any $a_1\in\C$ such that $|a_1|>1/2$, define $B_0$ as the origin $(0,0)\in\C^2$. Then there exists $\epsilon_1>0$ such that $\|f_1-\id\|_{\overline{B_0(\frac12)}}<\delta_0\cdot\frac1{2^{1+0}}$ (note that $B_0(\frac12)$ is the $1/2$-ball in $\C^2$).\\
Assume (\ref{eps}) true for $n$. In particular $a_i,\epsilon_i,R_i,\;i\le n-1$ and $a_n,\epsilon_n$ are defined, so we can take $R_n$ to satisfy all the constraints discussed in the previous sections; in particular in Lemma \ref{tech}, Proposition \ref{gen} and \ref{prop:monotone}, condition (\ref{small}). So we can fix $a_{n+1}$ such that $|a_{n+1}|>\sqrt2R_{n}+1/2$. Define then $\epsilon_{n+1}>0$ such that $\|f_{n+1}-\id\|_{\overline{B_n(\frac12)}}<\delta_n\cdot\frac1{2^{2n+1}}$.\\
Then (\ref{eps}) follows in the case $n+1$.
\end{proof}
With such a choice of parameters, we achieve
\begin{equation}\label{forprop}
\|f_n\circ\cdots\circ f_j-\id\|_{B_{j-1}}<\frac12\;\;,\hspace{2cm}1\le j\le n\;.
\end{equation}
The case $n=1$ follows from the base case of (\ref{eps}).
Assume (\ref{forprop}) true for $n$ and let us see it holds for $n+1$:
\begin{align}
\|f_{n+1}\circ\cdots\circ f_j-\id\|_{B_{j-1}}
\le&\|f_{n+1}-\id\|_{f_n\circ\cdots\circ\ f_{j}(B_{j-1})}\nonumber\\
+&\|f_{n}-\id\|_{f_{n-1}\circ\cdots\circ\ f_{j}(B_{j-1})}
+\cdots+\|f_{j}-\id\|_{B_{j-1}}\nonumber\\
\le\label{ind}&\|f_{n+1}-\id\|_{\overline{B_{j-1}(\frac12)}}\\
+&\|f_{n}-\id\|_{\overline{B_{j-1}(\frac12)}}
+\cdots+\|f_{j}-\id\|_{\overline{B_{j-1}(\frac12)}}\nonumber\\
\le\label{prop}&\frac{\delta_{j-1}}{2^{n+1+j-1}}+\cdots+\frac{\delta_{j-1}}{2^{j+j-1}}\\
=&\delta_{j-1}\cdot\sum_{i=2j-1}^{n+j}\frac1{2^i}<\frac12\;.\nonumber
\end{align}
Hence (\ref{forprop}) holds for any $1\le j\le n+1$ as promised. Just observe that (\ref{ind}) follows since
$$
f_{n}\circ\cdots\circ\ f_{j}(B_{j-1})\subset B_{j-1}(1/2)\subset\overline{B_{j-1}(1/2)}
$$
holds true by inductive hypothesis for any $1\le j\le n$ and (\ref{prop}) follows from Proposition \ref{epsss}.

\begin{rk}
Since the three sequences underlying the whole construction depend one on the other, it is important to highlight the order with which their elements have to be taken. We have seen in sections \ref{vhcomp}, \ref{sets} and \ref{pp} that in order to define $R_n$, the parameters
$$
a_j,\epsilon_j,R_j,\;j\le n-1,\;\; \text{and}\;\;\; a_n,\epsilon_n\;
$$
need to be already fixed; then we immediately fix $a_{n+1}$ such that $|a_{n+1}|>\sqrt2R_n+1/2$.
Similarly, in section \ref{epsilon} it turned out that in order to define $\epsilon_n$, 
$$
a_j,\epsilon_j,R_j,\;j\le n-1,\;\;\text{and}\;\;a_n
$$
need to be previously fixed; in particular, to choose $\epsilon_1$, some $a_1\in\C$, $|a_1|>1/2$, has to be fixed.
Namely, the order to follow for choosing the sequences of parameters in order to perform our construction is
$$
a_1,\epsilon_1,R_1,a_2,\epsilon_2,R_2,\dots\;.
$$

\end{rk}

\vspace{0cm}

\section{The Cantor set $C$ and the proper holomorphic embedding $\gamma$}

\subsection{Definition of the Cantor set $C$} \label{defcant}

We are ready to define the object of our interest:
\newline
\begin{equation}\label{cantordefinition}
C:=\bigcap_n\Delta_n\subset\overline\C.\\
\end{equation}
\newline
Let us see $C$ is indeed a Cantor set by proving that it fulfills all the properties \ref{ne}--\ref{met} stated in section \ref{prelim}.
Being $\overline \C$ metrizable, we get \ref{met}.
Since for every $n\in\N$ one has
$$
\gamma_n^{-1}(\infty_2)\subset\gamma_{n+j}^{-1}(\infty_2)\subset\Delta_{n+j}\;\;\;\;\forall j\in\N	\;,
$$
it follows that\\
\begin{equation}\label{descr}
\bigcup_{n\ge1}\gamma_n^{-1}(\infty_2)\subseteq C\;,
\end{equation}
from which $C\neq\emptyset$ and thus we get \ref{ne}.
From the above construction, every component of $\Delta_n$ eventually shrinks to a point (see e.g. (\ref{small})) and since the role of vertical and horizontal components switches at each step, it follows that $\forall n\in\N,\;\forall j\in\{1,\dots,b_n\}$ there will be infinitely many pair of pants inside each $U_j^{(n)}$, so there will be no isolated point. Moreover exploiting (\ref{kincr}) one gets
\begin{equation}\label{closedc}
\left(\overline\C\setminus C\right)^{\circ}
=\left(\bigcup_n K_n\right)^{\circ}
=\bigcup_n K_n^{\circ}
=\bigcup_n K_n
=\overline\C\setminus C
\end{equation}
from which $C$ is closed, and thus \ref{per} is verified.
Being $C$ closed and $\overline\C$ compact, it follows that $C$ is compact itself, so we have \ref{com}.
Finally, if any connected component of $C$ had more than one point, then (\ref{small}) would fail for $n$ big enough. So $C$ is totally disconnected, that is we have \ref{td} and so we have proved $C$ is indeed a Cantor set.

\subsection{Definition of the proper holomorphic embedding $\gamma\colon\overline \C\setminus C\hookrightarrow\C^2$}\label{phe}
From time to time we will use the following fact
\begin{rk}\label{wlog}
If $\{K_n\}_n$ is a normal exhaustion for a domain $\Omega\subseteq\overline\C$ and $\{z_n\}_n\subset\Omega$ converges to $z_0\in\partial\Omega$, then we can suppose without loss of generality that $z_n\in K_n\setminus K_{n-1}$.
\end{rk}
Consider $\gamma_n$ as a mapping $K_{n+1}^{\circ}\to\C^2$; thus the natural domain to define a limit mapping is
$
\bigcup_{n\ge1}K_n^{\circ}
$
which equals $\overline\C\setminus C$ from (\ref{closedc}).
For every $n\ge k$ one has
\begin{alignat*}{5}
\|\gamma_{n+1}-\gamma_n\|_{K_k}
&=\|(f_{n+1}-\id)\circ\gamma_n\|_{\gamma_k^{-1}(B_k)}\\
&=\|(f_{n+1}-\id)\circ(f_n\circ\cdots\circ f_{k+1}\circ\gamma_k)\|_{\gamma_k^{-1}(B_k)}\\
&\le\|f_{n+1}-\id\|_{f_n\circ\cdots\circ f_{k+1}(B_k)}\\
&\le\|f_{n+1}-\id\|_{\overline{B_k(\frac12)}}&&\mbox{by (\ref{forprop})}\\
&<\delta_k\cdot\frac1{2^{n+1+k}}&&\mbox{by (\ref{eps})}\;\;,
\end{alignat*}
from which it follows that
$$
\sum_{n\ge k}\|\gamma_{n+1}-\gamma_n\|_{K_k}
<\sum_{n\ge k}\delta_k\cdot\frac1{2^{n+1+k}}<+\infty\;
$$
for every fixed $k\ge1$; this proves that $\{\gamma_n\}_n$ converges uniformly on compact subsets of $\overline\C\setminus C$, in fact this last set is $\bigcup_nK_n$ and $\{K_n\}_n$ is a normal exhaustion of $\overline\C\setminus C$.
We have thus defined a holomorphic mapping
$$
\gamma\colon\overline \C\setminus C\to\C^2\;.
$$ 
It is now easily seen that $\gamma$ is a local embedding (namely: an embedding on every $K_{k-1}$) since for every $k\ge1$ one has
$$
\|\gamma-\gamma_k\|_{K_k}\le
\sum_{n\ge k}\|\gamma_{n+1}-\gamma_n\|_{K_k}
<\sum_{n\ge k}\delta_k\cdot\frac1{2^{n+1+k}}<\delta_k\;.
$$
Finally, global embedding property of $\gamma$ will follow automatically once properness is proved, achieving thus that $\gamma$ is a proper holomorphic embedding from the complement of a Cantor set into $\C^2$, as wanted.
Consider then $\{z_n\}_n\subset\overline\C\setminus C=\bigcup_nK_n$ such that $z_n\to\partial(\overline\C\setminus C)=\partial C$. By Remark \ref{wlog} we can assume $z_n\in K_n\setminus K_{n-1}\subseteq\Delta_{n-1}$, from which $|\gamma_{n-1}(z_n)|\to+\infty$ as $n\to+\infty$. Nevertheless this does not imply that $|\gamma(z_n)|\to+\infty$, as it just claims the divergence of $\{\gamma_{n-1}(z_n)\}_n$ and no information on how the sequence $\{\gamma_n\}_n$ acts on the elements of $\{z_n\}_n$ is given. In particular, we need to achieve $z_n$ to be set outside some $R_n$-ball (say $B_{n-3}$) by a whole tail of $\{\gamma_n\}_n$ (say $\gamma_{n+j}$ for every $j\in\N$). In this way the possibility for $z_n$ to come back inside a ball is prevented and we obtain properness of the limit function $\gamma$.
Hence properness will be achieved if, for instance, the following condition is satisfied:
\begin{equation}\label{proper}
\gamma_{n+j}(K_n\setminus K_{n-1})\subset\overline\C^2\setminus B_{n-3}\hspace{2cm}\forall n\ge3,\;j\in\N \;.
\end{equation}
As we assumed $z_n\in K_n\setminus K_{n-1}\;\;\forall n$, one has
$$
\gamma(z_n)=\lim_j\gamma_{j+n}(z_n)\stackrel{(\ref{proper})}{\in}\overline\C^2\setminus B_{n-3}\;,
$$
which implies $|\gamma(z_n)|\ge R_{n-3}$ and thus $\lim_{n}|\gamma(z_n)|=+\infty$, as wanted.
Notice that
\begin{align}\label{mid}
\gamma_n(\gamma_n^{-1}(B_n))=A_n\cap B_n\;
\end{align}
holds true for every $n\in\N$ and this allows to prove that (\ref{proper}) holds, in fact
\begin{alignat*}
\alpha
\gamma_{n+j}(K_n\setminus K_{n-1})
&=f_{n+j}\circ\cdots\circ f_{n+1}\circ\gamma_n\left(\gamma_n^{-1}(B_n)\setminus\gamma_{n-1}^{-1}(B_{n-1})\right)
&&&\mbox{by}\;\;(\ref{mid})\\
&= f_{n+j}\circ\cdots\circ f_{n+1}\left((A_n\cap B_n)\setminus (f_n(B_{n-1})\cap A_n)\right)\\
&\subseteq f_{n+j}\circ\cdots\circ f_{n+1}\left(B_n\setminus f_n(B_{n-1})\right)\\
&\subseteq f_{n+j}\circ\cdots\circ f_{n+1}\left(B_n\setminus B_{n-2}\right)
&&&\mbox{by (\ref{forprop})}\\
&\subseteq B_{n+1}\setminus B_{n-3}
&&&\mbox{by (\ref{forprop})}\\
&\subseteq\overline \C^2\setminus B_{n-3}\;,
\end{alignat*}
as promised.

\subsection{On the Hausdorff dimension of $C$ and proof of Theorem \ref{main}}
Fix $\epsilon>0$; the \emph{Hausdorff $\epsilon$-measure} of $C$ is, by definition
\begin{align*}
\mathcal H^{\varepsilon}(C)
:&=\liminf_{r\to0}\left\{\inf\left\{\sum_i|T_i|^{\varepsilon}\;:\;C\subseteq\bigcup_iT_i,\;\;|T_i|<r\right\}\right\}\;.
\end{align*}
Since $C\subset\Delta_n=\bigcup_{j=1}^{b_n}U_j^{(n)}$ holds true for any $n$ and since for every $r>0$ there exists $N_r$ such that $1/b_n^{n}<r$ for all $n\ge N_r$, exploiting (\ref{small}), it trivially follows that
\begin{align*}
\inf\left\{\sum_i|T_i|^{\varepsilon}\;:\;C\subseteq\bigcup_iT_i,\;\;|T_i|<r\right\}
&\le\inf\left\{\sum_{j=1}^{b_n}|U_j^{(n)}|^{\varepsilon}\;:\;\;n\ge N_r\right\}\\
&\le\inf\left\{\frac1{b_n^{\epsilon n-1}}\;:\;\;n\ge N_r\right\}=0\;,
\end{align*}
hence $\mathcal H^{\varepsilon}(C)=0$ for any $\epsilon>0$ fixed. Therefore, recalling that the \emph{Hausdorff dimension} of $C$ is defined as
$$
\dim_H(C):=\inf\{d\ge0\;:\;\mathcal H^d(C)=0\}\;,
$$
we conclude that
$$
\dim_H(C)=0\;.
$$
This last observation, together with the construction of the proper holomorphic embedding $\gamma:\overline \C\setminus C\hookrightarrow\C^2$ made in section \ref{phe}, completes the proof of Theorem \ref{main}.

\subsection{Two further characterizations of $C$}
The original definition of the Cantor set $C$ given in \cite{O} is
\begin{equation}\label{orevkovdef}
C=\gamma^{-1}(\infty_2)\;.
\end{equation}
Actually $\gamma$ is not defined on the Cantor set $C$, therefore, in order for this last relation to make sense, we need to extend $\gamma$ on $C$. Being $C$ a Cantor set in $\overline{\C}$, it is the boundary of $\overline{\C}\setminus C$, domain on which $\gamma$ is proper, therefore
$$
\lim_{w\to C,w\in \overline{\C}\setminus C}|\gamma(w)|=\infty\;,
$$
in fact if $z\in C$, being $C$ a Cantor set,  $z$ is an accumulation point of the complement of $C$, which is $\bigcup_nK_n$, thus $z$ can be hit by a sequence $\{z_n\}_n\subset \bigcup_nK_n$; by Remark \ref{wlog} let $z_n\in K_n\setminus K_{n-1}$. On one hand it is clear that $\lim_j\gamma_{n+j}(z_n)=\gamma(z_n)$ for every $n$, on the other hand (\ref{proper}) implies $\lim_n\lim_j\gamma_{n+j}(z_n)=\infty_2\;$. So we can extend $\gamma$ on $C$ by continuity defining $\gamma\equiv\infty_2$ on $C$. Since trivially $\gamma\neq\infty_2$ on $\overline{\C}\setminus C$, we have that defining $C$ as in (\ref{orevkovdef}) matches with the definition (\ref{cantordefinition}) given in Section \ref{defcant}. We conclude by characterizing $C$ in one last way:
\begin{equation}\label{lastdefinition}
C=\cl\left(\bigcup_{n\ge1}\gamma_n^{-1}(\infty_2)\right)\;.
\end{equation}
Being $C$ closed, (\ref{descr}) implies that
$$
\cl\left(\bigcup_{n\ge1}\gamma_n^{-1}(\infty_2)\right)\subseteq C\;.
$$
Consider now the following inductive procedure: at step $n$ we have $b_n$ points, the elements of $\gamma_n^{-1}(\infty_2)$, and $b_n$ neighborhoods $U_j^{(n)}$ of them. At step $n+1$ we have the old $b_n$ points and $k_{n}$ new points (that is, the elements of $\gamma_{n+1}^{-1}(\infty_2)$) and $b_{n+1}=b_n+k_{n}$ neighborhoods $U_j^{(n+1)}$ of them, $b_n$ of which are just the $U_j^{(n)}$ shrunk around the points of $\gamma_{n}^{-1}(\infty_2)$ and the $k_{n}$ remaining ones are neighborhoods of the new points $\gamma_{n+1}^{-1}(\infty_2)\setminus \gamma_{n}^{-1}(\infty_2)$.
Now $\cup_{j=1}^{b_n}U_j^{(n)}=\Delta_n$ forms a decreasing sequence of open sets, whose intersection defines $C$ and all the $U_j^{(n)}$ eventually shrink to the point they are neighborhood of, hence $C$ is the limit of the above inductive procedure.
Therefore, given $w\in C$, either it appears at some finite step of the procedure (hence $w\in\gamma_n^{-1}(\infty_2)$ for some $n$), or it appears in the limit: by construction, the points tend to accumulate (in fact there is no isolated point, see \ref{per}), that is $w\in\operatorname{Acc}\left(\bigcup_{n\ge1}\gamma_n^{-1}(\infty_2)\right)\setminus\left(\bigcup_{n\ge1}\gamma_n^{-1}(\infty_2)\right)$. So $w\in\cl\left(\bigcup_{n\ge1}\gamma_n^{-1}(\infty_2)\right)$ and we are done.

\newpage
\bibliographystyle{amsplain}

\end{document}